\newtheorem{theorem}{Theorem}[section]
\newtheorem{definition}[theorem]{Definition}
\newtheorem{lemma}[theorem]{Lemma}
\newtheorem{example}[theorem]{Example}
\newtheorem{remark}[theorem]{Remark}
\newproof{proof}{Proof}
\numberwithin{equation}{section}
\begin{document}

\begin{frontmatter}







\title{Analytic Fourier-Feynman transforms via the series approximation}


\author[label2]{Hyun Soo Chung}
\ead{hschung@dankook.ac.kr}

\address[label2]{Department of Mathematics,  
                 Dankook University,
                 Cheonan 31116, 
                 Republic of Korea}

\begin{abstract}
In this paper, we first establish an evaluation formula to calculate Wiener integrals of functionals on Wiener space. We then apply our evaluation formula to carry out very easily calculating for the analytic Fourier-Feynman transform of the functionals. Some examples are furnished to illustrate the usefulness of the evaluation formula. Finally, using the evaluation formula, 
 we establish the series approximation  for the analytic Fourier-Feynman transform. 
\end{abstract}

\begin{keyword}
Evaluation formula; unbounded functionals; analytic Fourier-Feynman transform; series approximation.

\medskip
\medskip
 \MSC   28C20,60J65, 46G05.

\end{keyword}

\end{frontmatter}

\tableofcontents




\section{Introduction}\label{sec:intro}

For $T>0$, let $C_0[0,T]$ denote the one-parameter Wiener space, that is, the space
of  all  real-valued continuous functions $x$ on $[0,T]$ with $x(0)=0$.
Let $\mathcal{M}$ denote the class of  all Wiener measurable subsets
of $C_0[0,T]$ and let ${m}$ denote Wiener measure.  
Then, as is well-known,  $(C_0[0,T],\mathcal{M}, {m})$ is 
a complete  measure space. For an integrable functional $F$ on $C_0[0,T]$, the Wiener integral of $F$ is denoted by
$$
\int_{C_0[0,T]} F(x) m(dx).
$$

\par
The concept of the analytic Fourier--Feynman transform(FFT) on the Wiener 
space $C_0[0,T]$, initiated by Brue \cite{Brue}, has been developed in 
the literature. This transform  and its properties are similar in many 
respects to the ordinary Fourier  transform. For an elementary 
introduction to the analytic FFT, see \cite{SS04} 
and the references cited therein. Also we refer to references \cite{CS80,CCKSY05,Cho13,CPS93,HPS96,HPS97-1,HPS97-2}. 
Many mathematicians have been studied the analytic FFT of various functionals on Wiener space. One of topics to the theory of Fourier-Feynman transform is concerned with the classes of all polynomial functionals. For instance, see \cite{HGN2011,N96}. These classes have been used to explain certain physical phenomena.

However, there are some difficulties to evaluate  analytic FFT for high order polynomial functionals as follows; Let $\langle v,x\rangle$ denote the Paley-Wiener-Zygmund(PWZ) stochastic integral \cite{PWZ33}. 
For each $n=1,2,\cdots$, let $G_n(x)=\langle v,x\rangle^n$ with $\|v\|_2=1$. To calculate the analytic FFT of $G_n$, we have to consider following Wiener integral
\begin{equation} \label{eq:ob1}
\int_{C_0[0,T]} [\langle v,x\rangle+\langle v,y\rangle]^n m(dx).
\end{equation}
One can see that it is not easy to calculate of the Wiener integral \eqref{eq:ob1} because the Lebesgue integral
\begin{equation} \label{eq:ob2}
\biggl(\frac{1}{2\pi  }\biggr)^{\frac12} \int_{\mathbb{R}} (u+\langle v,y\rangle)^n \exp\biggl\{-\frac{u^2}{2}\biggr\}du 
\end{equation}
is appeared in the calculation of the Wiener integral \eqref{eq:ob1} whenever we will apply the change of variable theorem. In order to evaluate the Lebesgue integral \eqref{eq:ob2}, we can use the integration by parts formulas $n$-times. However it is very difficult and complicate.

In this paper, we establish a new evaluation formula  to figure out these difficulties and complications. Using the evaluation formula, we obtain various examples involving the FFTs very easily. Finally, we give the series approximation for the   analytic FFT.

\section{Definitions and preliminaries}

We first list key some definitions and preliminaries which are needed to understand this paper.

A subset $B$ of $C_0[0,T]$ is said to be  scale-invariant measurable
provided $\rho B\in \mathcal{M}$ for all $\rho>0$, and a  scale-invariant
measurable set $N$ is said to be scale-invariant null provided $m(\rho N)=0$ 
for all $\rho>0$. A property that holds except on  a scale-invariant  null set 
is said to be hold  scale-invariant almost everywhere(s-a.e.). If two functionals 
$F$ and $G$ are equal s-a.e.,  we write $F\approx G$.

For  $v\in L_2[0,T]$ and $x\in C_0[0,T]$, let $\langle{v,x\rangle}$   denote the PWZ stochastic integral.
It is well-known fact that for each $v\in L_2[0,T]$, $\langle{v,x\rangle}$ exists for a.e. $x\in C_0[0,T]$ and if $v\in L_2[0,T]$ is a function of bounded variation, $\langle{v,x\rangle}$ equals to the Riemann-Stieltjes integral $\int_0^T v(t) dx(t)$ for s-a.e. $x\in C_0[0,T]$. Also,   $\langle{v,x\rangle}$  has the expected linearity property. Furthermore, $\langle{v,x\rangle}$ is a Gaussian random variable with mean $0$ and variance $\|v\|_2^2$.
For a more detailed study of the PWZ stochastic integral, see  \cite{R3,CLC14,CPS93,HPS95,PWZ33,SS04}.

We are ready to recall the definitions of analytic Feynman integral and analytic FFT on Wiener space.  

 Let $\mathbb C$, $\mathbb C_+$ and $\mathbb{\widetilde C}_+$ denote the set of  
complex numbers, complex numbers with positive real part   and  nonzero complex 
numbers with nonnegative real part, respectively. For each $\lambda \in \mathbb C$,
$\lambda^{1/2}$ denotes the principal square root of $\lambda$; i.e., $\lambda^{1/2}$ 
is always chosen to have positive real part, so that  
$\lambda^{-1/2}=(\lambda^{-1})^{1/2}$ is  in $\mathbb C_+$ for 
all $\lambda\in\widetilde{\mathbb C}_+$.  
Let  $F$ be 
a $\mathbb C$-valued scale-invariant measurable functional on $C_0[0,T]$ 
such that 
$$
J(\lambda)\equiv\int_{C_0[0,T]} F(\lambda^{-1/2} x )m(dx) 
$$
exists as a finite number for all $\lambda>0$. If there exists a function 
$J^* (\lambda)$ analytic on $\mathbb C_+$ such that  
$J^*(\lambda)=J(\lambda)$ for all $\lambda>0$, then  $J^*(\lambda)$ 
is defined to be the   analytic  Wiener integral  of $F$  over $C_0[0,T]$ with parameter $\lambda$, 
and for $\lambda \in \mathbb C_+$ we write
$$
J^*(\lambda)=\int_{C_0[0,T]}^{\text{\rm anw}_{\lambda}} 
F(x)m(dx).
$$
Let $q$ be a nonzero real number and let $F$ be a functional such that
$\int_{C_0[0,T]}^{\text{\rm anw}_{\lambda}} 
F(x) m(dx)$ 
exists for all $\lambda \in \mathbb C_+$. If the following limit exists, we call
it the   analytic Feynman integral of $F$ with parameter $q$ and we 
write
\[
\int_{C_0[0,T]}^{\text{\rm anf}_{q}} 
F(x) m(dx)
= \lim_{\begin{subarray}{1}
\lambda\to -iq \\ \lambda\in \mathbb C_+\end{subarray}}
\int_{C_0[0,T]}^{\text{\rm anw}_{\lambda}} 
F(x) m(dx).
\]

Next (see \cite{CCKSY05,HPS97-2}) we  state the definition of the  analytic FFT.

\begin{definition} 
 For $\lambda\in\mathbb{C}_+$ 
and $y \in C_{0}[0,T]$, let
$$
T_{\lambda}(F)(y)
=\int_{C_0[0,T]}^{\text{\rm anw}_{\lambda}} 
F(y+x) m(dx).
$$
We define the $L_1$ analytic Fourier-Feynman transform, $T_{q}^{(1)}(F)$ of $F$, 
by the formula  
\[
T_{q }^{(1)}(F)(y)
= \lim_{\begin{subarray}{1}
\lambda\to -iq \\  \lambda\in \mathbb C_+\end{subarray}}
T_{\lambda} (F)(y)
\]
for s-a.e. $y\in C_0[0,T]$.
\end{definition}

\par
We note that   $T_{q}^{(1)}(F)$ is  defined only s-a.e..
We also note that if $T_{q}^{(1)}(F)$  exists  and if $F\approx G$, then 
$T_{q}^{(1)}(G)$ exists  and  $T_{q}^{(1)}(G)\approx T_{q}^{(1)}(F)$.

 The following Wiener integration formula is used several times in this paper.  
Let $\{\alpha_1,\alpha_2,\cdots\}$ be any complete orthonormal set of functions in   $L_2[0,T]$ and let $h:\mathbb {R}^{n} \rightarrow \mathbb {R}$ be Lebesgue measurable.     Then
 \begin{equation}  \label{eq:well77}
\aligned
 \int_{C_0[0,T]}h(\langle{\alpha_1,x\rangle}, &\cdots,\langle{\alpha_n,x\rangle}) m(dx)\\
&=\biggl(\frac{1}{\sqrt{2\pi  }}\biggr)^{n}
\int_{\Bbb R^n}h(\vec{u})
  \exp \bigg\{ - \sum\limits_{j=1}^n\frac{u_j^2}{2 } \bigg\} d\vec{u}
\endaligned
\end{equation} 
in the sense that if either side of \eqref{eq:well77}  exists, both sides exist and equality holds.

We finish this section by giving the functionals on Wiener space which are used in this paper. Let $\{\alpha_1,\cdots,\alpha_n\}$ be a complete orthonormal set in $L_2[0,T]$ and let $F$ be a functional defined by the formula 
\begin{equation} \label{eq:functional}
F(x)= \langle \alpha_1,x \rangle^{2p_1} \times \cdots \times \langle \alpha_1,x \rangle^{2p_n}=\prod_{j=1}^n \langle \alpha_j,x \rangle^{2p_j}
\end{equation}
where $p_1,p_2,\cdots,p_{n-1}$ and $p_n$ are nonnegative integers.
Then one can see that the functionals defined in equation \eqref{eq:functional} are unbounded functionals, see \cite{HGN2011, N96}.  

\begin{remark}
Let $\mathcal{P}$ be the set of all functionals of the form
\begin{equation*} \label{eq:12}
H(x)=h(\langle \alpha_1,x \rangle,\cdots,\langle \alpha_n,x \rangle)
\end{equation*}
where $h$ is a continuous function on $\mathbb{R}^n$. By the Bolzano-Weierstrass theorem, there is a sequence $\{f_n\}$ of polynomial functions such that $\|h-f_n\|_\infty=\sup_{\vec u \in \mathbb{R}^n} |h(\vec u)-f_n(\vec u)| \rightarrow 0$ as $n \rightarrow \infty$.   Thus, the polynomial functionals such as equation \eqref{eq:12} are meaningful objects to study the FFTs. The usefulness of the functionals \eqref{eq:functional} will be explained in Section $5$ below.
 \end{remark}

\section{An evaluation formula}

In this section, we give an evaluation formula for the Wiener integrals. To do this, we shall start by giving two lemmas.  The first lemma is the formula for the Lebesgue integral.

\begin{lemma}
Let $s$ be a nonnegative integer. Then we have
\begin{equation} \label{eq:int1}
\aligned
&\int_{\mathbb{R}} u^s  \exp \biggl\{-  \frac{u^2}{2}\biggr\}du=2^{\frac{s-1}{2}} (1+(-1)^s) \Gamma\biggl(\frac{1+s}{2}\biggr)
\endaligned
\end{equation} 
where $\Gamma$ is the gamma function defined by the formula
$$
\Gamma(r)=\int_0^\infty t^{r-1}e^{-r}dt
$$
for a complex number $r$ with $Re(r)>0$.
\end{lemma}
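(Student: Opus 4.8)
The plan is to reduce the two-sided integral to a one-sided integral and then convert that into the integral representation of the Gamma function by a single change of variables. First I would split the integral over $\mathbb{R}$ as $\int_{-\infty}^{0}+\int_{0}^{\infty}$ and, in the contribution from $(-\infty,0]$, perform the substitution $u\mapsto -u$. Since $(-u)^s=(-1)^s u^s$ while the Gaussian weight $\exp\{-u^2/2\}$ is invariant under this reflection, that contribution equals $(-1)^s\int_{0}^{\infty}u^s\exp\{-u^2/2\}\,du$. Adding the two pieces yields
\[
\int_{\mathbb{R}} u^s\exp\Big\{-\frac{u^2}{2}\Big\}\,du=(1+(-1)^s)\int_{0}^{\infty}u^s\exp\Big\{-\frac{u^2}{2}\Big\}\,du,
\]
which already isolates the factor $(1+(-1)^s)$ appearing on the right-hand side of \eqref{eq:int1}; in particular the case of odd $s$ is settled at once, the integrand then being an odd function.

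Next I would evaluate the remaining one-sided integral by the substitution $t=u^2/2$, so that $u=(2t)^{1/2}$ and $du=(2t)^{-1/2}\,dt$. Then $u^s\,du=(2t)^{(s-1)/2}\,dt=2^{(s-1)/2}t^{(s-1)/2}\,dt$, and the integral becomes
\[
\int_{0}^{\infty}u^s\exp\Big\{-\frac{u^2}{2}\Big\}\,du=2^{(s-1)/2}\int_{0}^{\infty}t^{(1+s)/2-1}e^{-t}\,dt=2^{(s-1)/2}\,\Gamma\Big(\frac{1+s}{2}\Big),
\]
where the last equality is just the definition of the Gamma function evaluated at $r=(1+s)/2$, which satisfies $\mathrm{Re}(r)>0$ for every nonnegative integer $s$. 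Combining this with the parity identity from the previous paragraph gives exactly \eqref{eq:int1}.

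I do not anticipate a genuine obstacle here: the argument is a routine symmetrization followed by one change of variables, and the only points requiring care are the bookkeeping of the powers of $2$ in the substitution and recognizing that the exponent $(s-1)/2$ matches $r-1$ with $r=(1+s)/2$. The merit of writing the answer with the $(1+(-1)^s)$ factor is that \eqref{eq:int1} then holds uniformly in $s$, simultaneously encoding the vanishing of the integral for odd $s$ and the value $2^{(s+1)/2}\Gamma((1+s)/2)$ for even $s$, so that no case analysis beyond the single parity split is needed.
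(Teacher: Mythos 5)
Your proof is correct and complete: the parity split isolates the factor $(1+(-1)^s)$, and the substitution $t=u^2/2$ turns the one-sided integral into $2^{(s-1)/2}\Gamma\bigl(\frac{1+s}{2}\bigr)$ exactly as claimed (one can sanity-check $s=0$: the formula gives $2^{1/2}\Gamma(1/2)=\sqrt{2\pi}$). The paper states this lemma without any proof, so there is nothing to compare against; your argument is the standard one and supplies the omitted justification. (Incidentally, the paper's displayed definition of $\Gamma$ contains a typo, $e^{-r}$ in place of $e^{-t}$; your reading of it as the usual Gamma integral is the intended one.)
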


We now state some properties of the Gamma function $\Gamma$. 
For any positive integer $n$, let $n!=n\times(n-1)\times(n-2)\times\cdots\times1$, and let $(2n-1)!!=(2n-1)\times(2n-3)\times(2n-5)\times\cdots \times 3\times 1$ and set $(-1)!!=1$. Then  
\begin{enumerate}
  \item [(i)] $\Gamma(n)=(n-1)!$ for all positive integer $n$.
  \item [(ii)] $\Gamma(s+1)=s\Gamma(1)$ for all positive real number $s$.
  \item [(iii)] $\Gamma(n+\frac12)=\frac{(2n-1)!!}{2^n} \sqrt{\pi}$ for all positive integer $n$.
\end{enumerate}

In our next lemma, we establish a Wiener integration formula. 

\begin{lemma} \label{lem:0}
Let $p$ be a nonnegative integer and
let $\alpha$ be an element of $L_2[0,T]$ with $\|\alpha\|_2=1$.  Then for all nonzero real numbers $\gamma$ and $\beta$, we have
\begin{equation} \label{eq:opi}
\aligned
&\int_{C_0[0,T]}  [\gamma\langle \alpha,x \rangle+\beta\langle \alpha,y \rangle]^{2p} m(dx)\\
&\qquad\qquad =   \sum\limits_{s=0}^{p} {}_{2p} C_{2s} {(2s-1)!!} \gamma^{2s} \beta^{2p-2s} \langle \alpha,y \rangle^{2p-2s}   
\endaligned
\end{equation}
for $y\in C_0[0,T]$, where  ${}_n C_k =\frac{n!}{k!(n-k)!}$ for nonnegative integers $n$ and $k$ with $n \geq k$.
\end{lemma}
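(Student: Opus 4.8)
The plan is to reduce the Wiener integral on the left-hand side of \eqref{eq:opi} to a one-dimensional Gaussian integral over $\mathbb{R}$, then expand by the binomial theorem and apply the first lemma term by term. First I would fix $y\in C_0[0,T]$ and regard $\langle\alpha,y\rangle$ as a constant, so that the integrand becomes a polynomial in the single variable $\langle\alpha,x\rangle$. Since $\|\alpha\|_2=1$, the PWZ stochastic integral $\langle\alpha,x\rangle$ is a standard Gaussian random variable, so taking $\alpha$ as the first member of a complete orthonormal set and applying the Wiener integration formula \eqref{eq:well77} to a function of this single coordinate gives
$$
\int_{C_0[0,T]}[\gamma\langle\alpha,x\rangle+\beta\langle\alpha,y\rangle]^{2p}\,m(dx)
=\frac{1}{\sqrt{2\pi}}\int_{\mathbb{R}}(\gamma u+\beta\langle\alpha,y\rangle)^{2p}\exp\Bigl\{-\frac{u^2}{2}\Bigr\}\,du.
$$

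Next I would apply the binomial theorem to $(\gamma u+\beta\langle\alpha,y\rangle)^{2p}$ and interchange the finite sum with the integral, which produces the moments $\int_{\mathbb{R}}u^{k}\exp\{-u^2/2\}\,du$ for $0\le k\le 2p$. Here the first lemma does the decisive work: the factor $(1+(-1)^k)$ appearing in \eqref{eq:int1} annihilates every odd-order moment, so only the even indices $k=2s$ survive and the sum collapses from $2p+1$ terms to the $p+1$ terms indexed by $s=0,1,\ldots,p$, which is exactly the range appearing in the claimed formula. The binomial coefficient ${}_{2p}C_{k}$ then becomes ${}_{2p}C_{2s}$ and the remaining powers read $\gamma^{2s}(\beta\langle\alpha,y\rangle)^{2p-2s}$.

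Finally I would simplify the surviving even moments. Substituting $k=2s$ into \eqref{eq:int1} yields $\int_{\mathbb{R}}u^{2s}\exp\{-u^2/2\}\,du=2^{(2s+1)/2}\Gamma\bigl(s+\tfrac12\bigr)$, and property (iii) of the Gamma function, namely $\Gamma\bigl(s+\tfrac12\bigr)=\frac{(2s-1)!!}{2^s}\sqrt{\pi}$, turns this into $\sqrt{2\pi}\,(2s-1)!!$. Dividing by the prefactor $\frac{1}{\sqrt{2\pi}}$ leaves precisely $(2s-1)!!$, so collecting ${}_{2p}C_{2s}$ together with $(2s-1)!!$, $\gamma^{2s}$, $\beta^{2p-2s}$ and $\langle\alpha,y\rangle^{2p-2s}$ reproduces the right-hand side of \eqref{eq:opi}.

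Since every step is a finite algebraic manipulation, I do not expect a genuine obstacle. The only point requiring care is the bookkeeping of the two cancellations — the vanishing of all odd moments via the factor $(1+(-1)^k)$, and the cancellation of the $\sqrt{2\pi}$ and $2^{s}$ factors against the Gamma value — together with keeping the summation index $s$ notationally distinct from the exponent variable used in the statement of the first lemma.
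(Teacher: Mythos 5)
Your proposal is correct and follows essentially the same route as the paper's own proof: reduce to a one-dimensional Gaussian integral via the Wiener integration formula, expand by the binomial theorem, kill the odd moments with the factor $(1+(-1)^k)$ from the first lemma, and evaluate the even moments using $\Gamma\bigl(s+\tfrac12\bigr)=\frac{(2s-1)!!}{2^s}\sqrt{\pi}$. The bookkeeping of the cancellations is also carried out correctly, so nothing further is needed.
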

\begin{proof}
For $y\in C_0[0,T]$, let $v= \langle \alpha,y \rangle$. Then for all nonzero real numbers $\gamma$ and $\beta$, and  $y\in C_0[0,T]$, using equation \eqref{eq:well77} we have
$$
\aligned
&\int_{C_0[0,T]} [\gamma\langle \alpha,x \rangle+\beta\langle \alpha,y \rangle]^{2p} m(dx)
=  \biggl(\frac{1}{2\pi}\biggr)^{\frac{1}{2}} \int_{\mathbb{R}} [\gamma u+\beta v]^{2p}\exp \biggl\{-  \frac{u^2}{2}\biggr\}du.
\endaligned
$$
We now state the binomial formula as follows. For $a,b,u,v \in \mathbb{R}$ and for all $n=1,2,\cdots$, we have
$$
(au+bv)^n =\sum\limits_{k=0}^n {}_n C_k (au)^k (bv)^{n-k}=\sum\limits_{k=0}^n {}_n C_k a^k b^{n-k} u^k  v^{n-k}.
$$
Using the binomial formula,   equation \eqref{eq:int1} and some properties of the Gamma function, we have
\begin{equation} \label{eq:456}
\aligned
&\int_{C_0[0,T]} [\gamma\langle \alpha,x \rangle+\beta\langle \alpha,y \rangle]^{2p} m(dx)\\
&= \biggl(\frac{1}{2\pi}\biggr)^{\frac{1}{2}}\sum\limits_{k=0}^{2p} {}_{2p} C_k \gamma^k \beta^{2p-k}v^{2p-k} \int_{\mathbb{R}} u^k  \exp \biggl\{-  \frac{u^2}{2}\biggr\}du \\
&= \biggl(\frac{1}{2\pi}\biggr)^{\frac{1}{2}}\sum\limits_{k=0}^{2p} {}_{2p} C_k \gamma^k \beta^{2p-k}v^{2p-k} 2^{\frac{k-1}{2}} (1+(-1)^{k}) \Gamma\biggl(\frac{1+k}{2}\biggr)  \\
&= \biggl(\frac{1}{2\pi}\biggr)^{\frac{1}{2}}\sum\limits_{s=0}^{p} {}_{2p} C_{2s} \gamma^{2s} \beta^{2p-2s} v^{2p-2s} 2^{\frac{2s-1}{2}} (1+(-1)^{2s}) \Gamma\biggl(\frac{1+2s}{2}\biggr)  \\
&= \biggl(\frac{1}{2\pi}\biggr)^{\frac{1}{2}}\sum\limits_{s=0}^{p} {}_{2p} C_{2s} \gamma^{2s} \beta^{2p-2s}v^{2p-2s} 2^{s+\frac12}    \Gamma\biggl( {s+\frac12} \biggr)   \\
&= \biggl(\frac{1}{\pi}\biggr)^{\frac{1}{2}}\sum\limits_{s=0}^{p} {}_{2p} C_{2s} \gamma^{2s} \beta^{2p-2s}v^{2p-2s} 2^{s}  \frac{(2s-1)!!}{2^s} \sqrt{\pi}    \\
&=  \sum\limits_{s=0}^{p} {}_{2p} C_{2s}{(2s-1)!!} \gamma^{2s} \beta^{2p-2s} v^{2p-2s},
\endaligned
\end{equation}
  which completes the proof of the lemma as desired.
\end{proof}

Using equation \eqref{eq:opi} in Lemma \ref{lem:0}, we can establish the evaluation formula for the Wiener integral.

\begin{theorem} \label{theorem:1}
 Let $F$ be as in equation \eqref{eq:functional} above. Then for all nonzero real numbers $\gamma$ and $\beta$, we have
\begin{equation} \label{eq:11}
\aligned
 &\int_{C_0[0,T]}F(\gamma x+\beta y) m(dx)\\
&\qquad=\prod_{j=1}^n \biggl[ \sum\limits_{s=0}^{p_j} {}_{2p_j} C_{2s}  {(2s-1)!!} \gamma^{2s} \beta^{2p_j-2s} \langle \alpha_j,y\rangle^{2p_j-2s}   \biggr]
\endaligned
\end{equation}
for $y\in C_0[0,T]$. 
\end{theorem}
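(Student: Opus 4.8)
The plan is to reduce the $n$-dimensional Wiener integral to a product of one-dimensional integrals, each of which is already settled by Lemma~\ref{lem:0}. First I would use the linearity of the PWZ stochastic integral, namely $\langle \alpha_j, \gamma x + \beta y\rangle = \gamma\langle\alpha_j,x\rangle + \beta\langle\alpha_j,y\rangle$, to rewrite the integrand in product form:
\begin{equation*}
F(\gamma x + \beta y) = \prod_{j=1}^n \bigl[\gamma\langle\alpha_j,x\rangle + \beta\langle\alpha_j,y\rangle\bigr]^{2p_j}.
\end{equation*}

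Next, fixing $y$ and writing $v_j = \langle\alpha_j,y\rangle$, I would apply the Wiener integration formula \eqref{eq:well77} with $h(\vec u) = \prod_{j=1}^n (\gamma u_j + \beta v_j)^{2p_j}$. Because $\{\alpha_1,\dots,\alpha_n\}$ is orthonormal, the Gaussian weight $\exp\{-\sum_j u_j^2/2\}$ separates across the coordinates, while $h$ is itself a product over $j$; since $h$ is a polynomial, it is absolutely integrable against the Gaussian weight, so Fubini's theorem applies and the integral factors as
\begin{equation*}
\int_{C_0[0,T]}F(\gamma x + \beta y)\,m(dx) = \prod_{j=1}^n \left[\biggl(\frac{1}{\sqrt{2\pi}}\biggr)\int_{\mathbb R}(\gamma u_j + \beta v_j)^{2p_j}\exp\Bigl\{-\tfrac{u_j^2}{2}\Bigr\}\,du_j\right].
\end{equation*}

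Then I would identify each factor with the one-variable integral already evaluated in Lemma~\ref{lem:0}; indeed, each factor is precisely the quantity computed in the chain \eqref{eq:456}, now with $\alpha=\alpha_j$ and $v=v_j$. Substituting the closed form \eqref{eq:opi} gives
\begin{equation*}
\biggl(\frac{1}{\sqrt{2\pi}}\biggr)\int_{\mathbb R}(\gamma u_j + \beta v_j)^{2p_j}\exp\Bigl\{-\tfrac{u_j^2}{2}\Bigr\}\,du_j = \sum_{s=0}^{p_j}{}_{2p_j}C_{2s}\,(2s-1)!!\,\gamma^{2s}\beta^{2p_j-2s}\langle\alpha_j,y\rangle^{2p_j-2s},
\end{equation*}
and taking the product over $j$ yields \eqref{eq:11}.

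The only point requiring genuine care is the factorization step, which hinges on the orthonormality of the $\alpha_j$: this is what decouples the Gaussian coordinates and lets the product integrand split into independent one-dimensional integrals. Since the integrand is merely a polynomial times a Gaussian, integrability is automatic and both sides of \eqref{eq:well77} are finite, so no delicate convergence issue arises. Thus the argument is essentially Lemma~\ref{lem:0} applied coordinatewise, with Fubini supplying the decoupling.
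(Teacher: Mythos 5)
Your argument is correct and follows essentially the same route as the paper's own proof: expand $F(\gamma x+\beta y)$ via linearity of the PWZ integral, apply the Wiener integration formula \eqref{eq:well77} to pass to an $n$-dimensional Gaussian integral, factor it coordinatewise, and invoke Lemma~\ref{lem:0} (equation \eqref{eq:opi}) for each one-dimensional factor. Your explicit remarks about Fubini and integrability are a slight refinement of the paper's presentation but do not change the substance.
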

\begin{proof}
We first note that for each $j=1,2,\cdots,n$, let $v_j= \langle \alpha_j,y \rangle$. Then for all nonzero real numbers $\gamma$ and $\beta$, and  $y\in C_0[0,T]$,
$$
\aligned
&\int_{C_0[0,T]} F(\gamma x+\beta y) m(dx)\\
&=\int_{C_0[0,T]} \prod_{j=1}^n [\gamma\langle \alpha_j,x \rangle+\beta\langle \alpha_j,y \rangle]^{2p_j} m(dx)\\
&= \biggl(\frac{1}{2\pi}\biggr)^{\frac{n}{2}} \int_{\mathbb{R}^n} \prod_{j=1}^n [\gamma u_j+\beta v_j]^{2p_j}\exp \biggl\{-\sum\limits_{j=1}^n \frac{u_j^2}{2}\biggr\}d\vec{u}\\
&= \prod_{j=1}^n \biggl[\biggl(\frac{1}{2\pi}\biggr)^{\frac{1}{2}} \int_{\mathbb{R}} [\gamma u_j+\beta v_j]^{2p_j}\exp \biggl\{-  \frac{u_j^2}{2}\biggr\}du_j\biggr]\\ 
& =\prod_{j=1}^n \biggl[\int_{C_0[0,T]} [\gamma\langle \alpha_j,x \rangle+\beta \langle \alpha_j,y \rangle]^{2p_j} m(dx)\biggr].
\endaligned
$$
Finally, using equation \eqref{eq:opi}   $n$-times repeatedly, we can establish equation \eqref{eq:11} as desired.
 \end{proof}

\section{Analytic FFT via the evaluation formula}

In this section we give an application of our evaluation formula.
 Theorem \ref{thm:1} is one of the main results in this paper.  

\begin{theorem} \label{thm:1}
Let $F$   be as in Theorem \ref{theorem:1} above and let $q$ be a nonzero real number. Then the 
analytic FFT $T_q^{(1)}(F)$ of $F$ exists and is given by the formula
\begin{equation} \label{eq:for22}
T_q^{(1)}(F)(y)=\prod_{j=1}^n \biggl[ \sum\limits_{s=0}^{p_j} {}_{2p_j} C_{2s} {(2s-1)!!}\biggl(\frac{i}{q}\biggr)^s \langle \alpha_j,y \rangle^{2p_j-2s}        \biggr].
\end{equation}
\end{theorem}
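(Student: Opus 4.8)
The plan is to reduce everything to Theorem \ref{theorem:1} by computing $T_\lambda(F)(y)$ first for real positive $\lambda$ and then continuing analytically and passing to the Feynman limit. Fix $y \in C_0[0,T]$. For $\lambda > 0$ the analytic Wiener integral defining $T_\lambda(F)(y)$ coincides with the ordinary Wiener integral of the translated functional, namely $\int_{C_0[0,T]} F(\lambda^{-1/2}x + y)\, m(dx)$. This is exactly the integral treated in Theorem \ref{theorem:1} with $\gamma = \lambda^{-1/2}$ and $\beta = 1$. Applying that theorem and using $\gamma^{2s} = (\lambda^{-1/2})^{2s} = \lambda^{-s}$, I obtain, for every $\lambda > 0$,
\[
J(\lambda) := T_\lambda(F)(y) = \prod_{j=1}^n \biggl[\sum_{s=0}^{p_j} {}_{2p_j}C_{2s}\,(2s-1)!!\,\lambda^{-s}\,\langle \alpha_j, y\rangle^{2p_j-2s}\biggr].
\]

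Next I would observe that the right-hand side is a finite product of finite sums of the maps $\lambda \mapsto \lambda^{-s}$, hence a polynomial in $\lambda^{-1}$; in particular it is analytic on all of $\mathbb{C}_+$ (indeed on $\mathbb{C}\setminus\{0\}$). Since this analytic function agrees with $J(\lambda)$ for every $\lambda > 0$, the uniqueness of analytic continuation identifies it with the analytic Wiener integral $J^*(\lambda)$. Thus the displayed formula for $T_\lambda(F)(y)$ holds for all $\lambda \in \mathbb{C}_+$, which in particular establishes that the analytic Wiener integral exists.

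Finally, to obtain the $L_1$ analytic FFT, I would let $\lambda \to -iq$ through $\mathbb{C}_+$. Because $q \neq 0$, the target point $-iq$ is nonzero, so each factor $\lambda^{-s}$ is continuous there and the limit may be taken termwise, giving $\lim_{\lambda \to -iq}\lambda^{-s} = (-iq)^{-s}$. Using $(-iq)^{-1} = i/q$ one has $(-iq)^{-s} = (i/q)^s$, and substituting this into the product reproduces formula \eqref{eq:for22}; the existence of this limit simultaneously yields the existence of $T_q^{(1)}(F)$. The steps are essentially mechanical, and the only point that deserves care is the identification in the second step: one must confirm that the correct analytic continuation is precisely the polynomial in $\lambda^{-1}$ one writes down, which is immediate since that polynomial is already analytic on $\mathbb{C}_+$ and matches $J$ on the positive real axis.
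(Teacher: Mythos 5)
Your proposal is correct and follows essentially the same route as the paper: substitute $\gamma=\lambda^{-1/2}$, $\beta=1$ into Theorem \ref{theorem:1}, recognize the result as a polynomial in $\lambda^{-1}$ that is analytic on $\mathbb{C}_+$ and agrees with $J(\lambda)$ on $(0,\infty)$, and then let $\lambda\to -iq$ using $(-iq)^{-s}=(i/q)^{s}$. The only cosmetic difference is that the paper verifies analyticity of $J^*$ via the Cauchy--Morera argument, whereas you observe it directly; the substance is identical.
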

\begin{proof}
In equation \eqref{eq:11}, set $\gamma=\lambda^{-\frac12}$ and $\beta=1$ for $\lambda>0$. Then it follows  that
 that for all $\lambda>0$ and s-a.e. $y\in C_0[0,T]$,  we have
\begin{equation}  \label{eq:111}
T_\lambda(F)(y)=\prod_{j=1}^n \biggl[ \sum\limits_{s=0}^{p_j} {}_{2p_j} C_{2s} {(2s-1)!!}\lambda^{-s} \langle \alpha_j,y \rangle^{2p_j-2s}       \biggr].
\end{equation}
From this we observe that $T_\lambda(F)(y)$ of  $F$ exists for all $\lambda>0$. We will show that the analytic FFT $T_q^{(1)}(y)$ of $F$ exists. To do this, for $\lambda \in \mathbb{C}_+$, let
$$
J^*(\lambda)= \prod_{j=1}^n \biggl[ \sum\limits_{s=0}^{p_j} {}_{2p_j} C_{2s}{(2s-1)!!} \lambda^{-s} \langle \alpha_j,y \rangle^{2p_j-2s}        \biggr].
$$
Then $J(\lambda)=J^*(\lambda)$ for all $\lambda$.
Let $\Lambda$ be any simple closed contour in $\mathbb{C}_+$. Then  using   the Cauchy theorem, we have
$$
\aligned
& \int_{\Lambda} J^*(\lambda) d\lambda= \int_{\Lambda}   \prod_{j=1}^n \biggl[ \sum\limits_{s=0}^{p_j} {}_{2p_j} C_{2s}  {(2s-1)!!} \lambda^{-s} \langle \alpha_j,y \rangle^{2p_j-2s}       \biggr] d\lambda=0
\endaligned
$$
because the function $ \sum\limits_{s=0}^{p_j} {}_{2p_j} C_{2s}{(2s-1)!!} \lambda^{-s} \langle \alpha_j,y \rangle^{2p_j-2s}       $ is an analytic function of $\lambda$ in $\mathbb{C}_+$. Hence using   Morera's theorem, we conclude that $J^*(\lambda)$ is analytic on $\mathbb{C}_+$.  It remain to  show that
$$
\lim_{\begin{subarray}{1}
\lambda\to -iq \\  \lambda\in \mathbb C_+\end{subarray}}J^*(\lambda)=\prod_{j=1}^n \biggl[ \sum\limits_{s=0}^{p_j} {}_{2p_j} C_{2s}{(2s-1)!!} \biggl(\frac{i}{q}\biggr)^s \langle \alpha_j,y \rangle^{2p_j-2s}     \biggr].  
$$
However, it is immediate consequence from the fact that the functions
$\lambda^s$, $s=1,2,\cdots$, are continuous and analytic on $\mathbb
C_+$. Thus  we complete the proof of Theorem \ref{thm:1} as desired.
\end{proof}

We now give some formulas for the analytic FFT via the evaluation formula obtained by equation \eqref{eq:for22}. We first give several formulas for the $1$-dimensional functionals as a table.

 \begin{table}[ht]
\begin{center}
\begin{tabular}{ |l| l  |l|    }
\hline
   $n=1, p_j=j$
   &  $   \hbox{analytic FFT of} \,\, F_j, j=1,2,3,4  $    \\\hline
   $F_1(x)=\langle \alpha_1,x \rangle^2$  
   &   $\langle \alpha_1,y \rangle^2+\frac{i}{q}$
 
$$
                                                                                                \\\hline
 $F_2(x)=\langle \alpha_1,x \rangle^4$ 
   &  $
  \langle \alpha_1,y \rangle^4+\frac{6i}{q}\langle \alpha_1,y \rangle^2-\frac{3}{q^2}$
 
$$
                                                                                                \\\hline
 $F_3(x)=\langle \alpha_1,x \rangle^6$  
  &  $
  \langle \alpha_1,y \rangle^6+\frac{15i}{q}\langle \alpha_1,y \rangle^4-\frac{45}{q^2}\langle \alpha_1,y \rangle^2-\frac{15i}{q^3}$
 
$$
                                                                                                \\\hline
$F_4(x)=\langle \alpha_1,x \rangle^4$  
  &  $
  \langle \alpha_1,y \rangle^8+\frac{28i}{q}\langle \alpha_1,y \rangle^6-\frac{210}{q^2}\langle \alpha_1,y \rangle^4
-\frac{320i}{q^3}\langle \alpha_1,y \rangle^2+\frac{105}{q^4}$
 
$$
                                                                                                \\\hline
  
\end{tabular}
\end{center}
\medskip
\caption{Formulas for the $1$-dimensional functionals}
\end{table}

From now on, we next give a  formula for the analytic FFT with the multi-dimensional functionals.


\begin{example} \label{ex:2}
Let $F_5(x)=\langle \alpha_1,x \rangle^2\langle \alpha_2,x \rangle^4$ (Set $n=2; p_1=1, p_2=2$ in equation \eqref{eq:functional}). Then for s-a.e. $y\in C_0[0,T]$, we have
$$
\aligned
T_q^{(1)}(F_5)(y) 
&=\prod_{j=1}^2 \biggl[ \sum\limits_{s=0}^{p_j} {}_{2p_j} C_{2s} \biggl(\frac{i}{q}\biggr)^s \langle \alpha_j,y \rangle^{2p_j-2s}     {(2s-1)!!}  \biggr]\\
&=\biggl[\sum\limits_{s=0}^1 {}_2C_{2s}{(2s-1)!!}\biggl(\frac{i}{q}\biggr)^s \langle \alpha_1,y \rangle^{2-2s}    \biggr]\\
&\qquad\times \biggl[\sum\limits_{s=0}^2 {}_4C_{2s} {(2s-1)!!}\biggl(\frac{i}{q}\biggr)^s \langle \alpha_2,y \rangle^{4-2s}     \biggr]\\
&=\biggl[ \langle \alpha_1,y \rangle^2+\frac{i}{q}\biggr]\biggl[ \langle \alpha_2,y \rangle^4+\frac{6i}{q}\langle \alpha_2,y \rangle^2-\frac{3}{q^2}\biggr]
.\endaligned
$$
\end{example}

\begin{remark}
From the definition of analytic FFT, one can observe that for $\lambda>0$,
$$
\aligned
T_\lambda(F_5)(y)&=\int_{C_0[0,T]}    [\lambda^{-\frac12}\langle \alpha_1,x \rangle+\langle \alpha_1,y \rangle]^{2}[\lambda^{-\frac12}\langle \alpha_2,x \rangle+\langle \alpha_2,y \rangle]^{4} m(dx)\\
&=\int_{C_0[0,T]}    [\lambda^{-1}\langle \alpha_1,x \rangle^2+2\lambda^{-\frac12}\langle \alpha_1,x \rangle\langle \alpha_1,y \rangle+\langle \alpha_1,y \rangle^2] \\
&\qquad \times  [\lambda^{-2}\langle \alpha_2,x \rangle^4+4\lambda^{-\frac32} \langle \alpha_2,x \rangle^3\langle \alpha_2,y \rangle
+6 \lambda^{-1}\langle \alpha_2,x \rangle^2 \langle \alpha_2,y \rangle^2\\
&\qquad\qquad+4 \lambda^{-\frac12}\langle \alpha_2,x \rangle\langle \alpha_2,y \rangle^3
 +\langle \alpha_2,y \rangle^4] m(dx)\\
&=\int_{C_0[0,T]}    [\lambda^{-3}\langle \alpha_1,x \rangle^2\langle \alpha_2,x \rangle^4+  6\lambda^{-2} \langle \alpha_1,x \rangle^2\langle \alpha_2,x \rangle^2 \langle \alpha_2,y \rangle^2\\
&\qquad\qquad\qquad+\lambda^{-1} \langle \alpha_1,x \rangle^2 \langle \alpha_2,y \rangle^4+\lambda^{-2} \langle \alpha_2,x \rangle^4 \langle \alpha_1,y \rangle^2\\
&\qquad\qquad\qquad\qquad+ 6 \lambda^{-1} \langle \alpha_2,x \rangle^2 \langle \alpha_1,y \rangle^2 \langle \alpha_2,y \rangle^2 + \langle \alpha_1,y \rangle^2 \langle \alpha_2,y \rangle^4 ]  m(dx)\\
&= 3\lambda^{-3} +  6\lambda^{-2} \langle    \alpha_2,y \rangle^2 +\lambda^{-1}   \langle \alpha_2,y \rangle^4+3 \lambda^{-2}   \langle \alpha_1,y \rangle^2\\
&\qquad\qquad\qquad\qquad+ 6 \lambda^{-1}   \langle \alpha_1,y \rangle^2 \langle \alpha_2,y \rangle^2 + \langle \alpha_1,y \rangle^2 \langle \alpha_2,y \rangle^4.   
 \endaligned
$$
It can be analytically
continued on $\mathbb C_+$ and so letting $\lambda \rightarrow -iq$, we have
$$
\aligned
&T_q^{(1)}(F_5)(y)=\biggl[ \langle \alpha_1,y \rangle^2+\frac{i}{q}\biggr]\biggl[ \langle \alpha_2,y \rangle^4+\frac{6i}{q}\langle \alpha_2,y \rangle^2-\frac{3}{q^2}\biggr]
.
\endaligned
$$
As seen above, it is not easy to calculate. Hence our evaluation formula is a meaningful subject.
\end{remark}

We give more explicit formulas for the analytic FFT with the multi-dimensional functionals.

\begin{example} \label{ex:3}
Let $F_6(x)=\langle \alpha_1,x \rangle^4\langle \alpha_2,x \rangle^2\langle \alpha_3,x \rangle^6$ (Set $n=3; p_1=2, p_2=1, p_3=3$ in equation \eqref{eq:functional}). Then for s-a.e. $y\in C_0[0,T]$, we have
$$
\aligned
T_q^{(1)}(F_6)(y)
&=\prod_{j=1}^3\biggl[ \sum\limits_{s=0}^{p_j} {}_{2p_j} C_{2s} {(2s-1)!!}\biggl(\frac{i}{q}\biggr)^s \langle \alpha_j,y \rangle^{2p_j-2s}       \biggr]\\
&=\biggl[\sum\limits_{s=0}^2 {}_4C_{2s} {(2s-1)!!}\biggl(\frac{i}{q}\biggr)^s \langle \alpha_1,y \rangle^{4-2s}     \biggr]\\
&\qquad\times \biggl[\sum\limits_{s=0}^1 {}_2C_{2s} {(2s-1)!!}\biggl(\frac{i}{q}\biggr)^s \langle \alpha_2,y \rangle^{2-2s}      \biggr]\\
&\qquad\qquad\times \biggl[\sum\limits_{s=0}^3 {}_6C_{2s}{(2s-1)!!}\biggl(\frac{i}{q}\biggr)^s \langle \alpha_3,y \rangle^{6-2s}      \biggr]\\
&=\biggl[ \langle \alpha_1,y \rangle^4+\frac{6i}{q}\langle \alpha_1,y \rangle^2-\frac{3}{q^2}\biggr]\biggl[ \langle \alpha_2,y \rangle^2+\frac{i}{q}\biggr]\\
&\qquad \times\biggl[ \langle \alpha_3,y \rangle^6+\frac{15i}{q}\langle \alpha_3,y \rangle^4-\frac{45}{q^2}\langle \alpha_3,y \rangle^2-\frac{15i}{q^3}\biggr].
\endaligned
$$
\end{example}

\begin{example} \label{ex:4}
Let $F_7(x)=\langle \alpha_1,x \rangle^4\langle \alpha_3,x \rangle^2\langle \alpha_4,x \rangle^4$ (Set $n=4; p_1=2, p_2=0, p_3=1, p_4=3$ in equation \eqref{eq:functional}). Then for s-a.e. $y\in C_0[0,T]$, we have
$$
\aligned
T_q^{(1)}(F_7)(y)
&=\prod_{j=1}^4\biggl[ \sum\limits_{s=0}^{p_j} {}_{2p_j} C_{2s} {(2s-1)!!}\biggl(\frac{i}{q}\biggr)^s \langle \alpha_j,y \rangle^{2p_j-2s}        \biggr]\\
&=\biggl[\sum\limits_{s=0}^2 {}_4C_{2s}{(2s-1)!!}\biggl(\frac{i}{q}\biggr)^s \langle \alpha_1,y \rangle^{4-2s}     \biggr]\\
&\qquad\times\biggl[\sum\limits_{s=0}^0 {}_0C_{2s} {(2s-1)!!}\biggl(\frac{i}{q}\biggr)^s \langle \alpha_2,y \rangle^{-2s}      \biggr]\\
&\qquad\qquad\times \biggl[\sum\limits_{s=0}^1 {}_2C_{2s}{(2s-1)!!}\biggl(\frac{i}{q}\biggr)^s \langle \alpha_3,y \rangle^{2-2s}      \biggr]\\
&\qquad\qquad\qquad\times \biggl[\sum\limits_{s=0}^2 {}_4C_{2s}{(2s-1)!!}\biggl(\frac{i}{q}\biggr)^s \langle \alpha_4,y \rangle^{6-2s}      \biggr]\\
&=\biggl[ \langle \alpha_1,y \rangle^4+\frac{6i}{q}\langle \alpha_1,y \rangle^2-\frac{3}{q^2}\biggr]\\
&\qquad \times \biggl[ \langle \alpha_3,y \rangle^2+\frac{i}{q}\biggr]
\biggl[ \langle \alpha_4,y \rangle^4+\frac{6i}{q}\langle \alpha_1,y \rangle^2-\frac{3}{q^2}\biggr].
\endaligned
$$
\end{example}

\begin{remark}
We considered only seven  functionals. But, we can obtain various functionals with high dimensional.
\end{remark}

\section{Series approximation}

  Using equation \eqref{eq:for22}, in this section, we shall establish a series
approximation for the analytic FFT through several steps."
 
{\bf  Step $1$ :}
Let $h\in C^\infty(\mathbb{R}^n)$ such that
\begin{equation} \label{eq:concon}
\int_{\mathbb{R}^n} |h(\vec{u})| \exp\biggl\{-a\sum\limits_{j=1}^n u_j^2 \biggr\} d\vec{u}<\infty
\end{equation}
for all $a>0$. Then the Maclaurin series expansion of $h$ is  give by the formula
\begin{equation} \label{eq:ap111}
h(\vec{u})= h(\vec{0})+ \sum\limits_{k=1}^\infty \frac{1}{k!}\biggl(\sum\limits_{i_1,\cdots,i_k=1}^n h_{u_{i_1}\cdots u_{i_k}}(\vec{0}) u_{i_1}\cdots u_{i_k}\biggr)
\end{equation}
where $h_{u_{i_1}\cdots u_{i_k}}$ is $k$-th the derivative of $h$. Assume that $h_{u_{i_1}\cdots u_{i_k}}(\vec{0})=1$ for all derivatives of $h$ (In fact, all process of this development can be applied in the case that any $k$-th
order partial derivatives $h_{u_{i_1} \cdots u_{i_k}}$'s have same value
at $\vec 0$ is constant). Then equation \eqref{eq:ap111} can be written by
\begin{equation} \label{eq:ap1}
\aligned
h(\vec{u})&=  h(\vec{0})+ \sum\limits_{k=1}^\infty \frac{1}{k!} (u_1+\cdots+u_n)^k \\
&\equiv \lim_{r \rightarrow \infty} h_r(\vec{u})
\endaligned
\end{equation}
where $h_r(\vec{u})=h(\vec{0})+ \sum\limits_{k=1}^r \frac{1}{k!} (u_1+\cdots+u_n)^k, r=1,2,\cdots$. Hence we have
$
|h(\vec{u})-h_r(\vec{u})|\rightarrow 0
$
as $r\rightarrow \infty$.

{\bf  Step $2$ :}
For each $r=1,2,\cdots$, let $H_r(x)=h_r(\langle \alpha_1,x\rangle,\cdots,\langle \alpha_n,x\rangle)$ and let $H(x)=h(\langle \alpha_1,x\rangle,\cdots,\langle \alpha_n,x\rangle)$. Then one can check that for all $\rho>0$,
$$
\int_{C_0[0,T]} |H(\rho x) - H_r(\rho x)| m(dx) \rightarrow 0
$$ 
as $r\rightarrow\infty$ because for all $\rho>0$, we see that
$$
\aligned
L_r&\equiv\int_{C_0[0,T]}|H(\rho x)-H_r(\rho x)| m(dx)\\
&=\int_{C_0[0,T]}|h(\rho\langle \alpha_1,x\rangle,\cdots,\rho\langle  \alpha_n,x\rangle)-h_r(\rho\langle \alpha_1,x\rangle,\cdots,\rho\langle \alpha_n,x\rangle)| m(dx)\\
&=\biggl(\frac{1}{\sqrt{2\pi \rho^2}}\biggr)^{n} \int_{\mathbb{R}^n} |h(\vec{u})-h_r(\vec{u})|\exp\biggl\{ -\sum\limits_{j=1}^n\frac{u_j^2}{2\rho^2}\biggr\} d\vec{u}\\
&\leq\biggl(\frac{1}{\sqrt{2\pi \rho^2}}\biggr)^{n}\int_{\mathbb{R}} |h(\vec{u})| \exp\biggl\{ -\sum\limits_{j=1}^n\frac{u_j^2}{2\rho^2}\biggr\} d\vec{u}\\
&\qquad\qquad+\biggl(\frac{1}{\sqrt{2\pi\rho^2 }}\biggr)^{n}\int_{\mathbb{R}} |h_r(\vec{u})|\exp\biggl\{ -\sum\limits_{j=1}^n\frac{u_j^2}{2\rho^2}\biggr\} d\vec{u}<\infty
\endaligned
$$
for all $r=1,2,\cdots$.
Hence we can conclude that 
$L_r$
tends to zero as $r \rightarrow 0$
from the dominated convergence theorem.

{\bf  Step $3$ :}
One can see that
$$
\aligned
&\int_{C_0[0,T]} H(x) m(dx)\\
&=\begin{cases}
h(\vec{0}) & \\
h(\vec{0})+\sum\limits_{l=1}^\infty \frac{1}{(2l)!}\int_{C_0[0,T]} (\langle \alpha_1,x\rangle+\cdots+\langle \alpha_n,x\rangle)^{2l}m(dx)&,  
\end{cases}\\
&=\begin{cases}
h(\vec{0}) & \\
h(\vec{0})+\sum\limits_{l=1}^\infty   I_{2l} & 
\end{cases}\\
\endaligned
$$
for $l=1,2,\cdots$, where
$$
\aligned
I_{2l}&= \sum\limits_{2p_1+\cdots+2p_n=2l}   \frac{1}{(2p_1)!\cdots (2p_n)!}
 \int_{C_0[0,T]} \langle \alpha_1,x\rangle^{2p_1}\cdots\langle \alpha_n,x\rangle^{2p_n} m(dx) \\
&=\sum\limits_{2p_1+\cdots+2p_n=2l}   \frac{1}{(2p_1)!\cdots (2p_n)!}
 \int_{C_0[0,T]} F(x) m(dx),
\endaligned
$$
where $F$ is given by equation \eqref{eq:functional} above.
  This means that we can give the formula for analytic FFT as the series approximation by using equation \eqref{eq:for22}  in Theorem \ref{thm:1}.

{\bf  Step $4$ :} We can conclude that
\begin{equation} \label{eq:series}
T_q^{(1)}(H_r) \rightarrow T_q^{(1)}(H)
\end{equation}
in the sense $L_1(C_0[0,T])$. In fact, for each $\lambda>0$, we have
$$
\aligned
&\int_{C_0[0,T]}|T_\lambda(H)(y)-T_\lambda(H_r)(y)| m(dy)\\
&\leq \int_{C_0[0,T]}\int_{C_0[0,T]} |H(\lambda^{-\frac12}x+y)-H_r(\lambda^{-\frac12}x+y)| m(dx) m(dy)\\
&= \int_{C_0[0,T]} |H(\sqrt{\lambda^{-1}+1}z)-H_r(\sqrt{\lambda^{-1}+1}z)| m(dz)  \\
&\rightarrow 0
\endaligned
$$ 
as $r\rightarrow\infty$. The equality is obtained from the condition \eqref{eq:concon} and the Fubini theorem for the Wiener integrals. Also, by using the uniqueness of the analytic extension and the limit, we obtain equation \eqref{eq:series} as desired.
Hence the
 series approximation of the analytic FFT of functional $H$ is given by the formula
$$
 T_q^{(1)}(H)(y)=\lim_{r\rightarrow \infty}  T_q^{(1)}(H_r)(y)
$$
in the sense $L_1(C_0[0,T])$,
where
$$
\aligned
&T_q^{(1)}(H_r)(y)\\
&=\begin{cases}
h(\vec{0}) &,r \,\,\hbox{is odd}\\
h(\vec{0})+\sum\limits_{l=1}^r \biggl(\sum\limits_{2p_1+\cdots+2p_n=2l} \frac{1}{(2p_1)!\cdots (2p_n)!}\\
\qquad\qquad\qquad\qquad \times \prod_{j=1}^l \biggl[ \sum\limits_{s=0}^{p_j} {}_{2p_j} C_{2s} \biggl(\frac{i}{q}\biggr)^s \langle \alpha_j,y \rangle^{2p_j-2s}     {(2s-1)!!}  \biggr] \biggr)  &, r \,\,\hbox{is even}
\end{cases}\\
\endaligned
$$
for s-a.e. $y\in C_0[0,T]$.
 
 We finish this paper by giving a remark.

\begin{remark}
In order to establish the series approximation with respect to the analytic FFT, we assumed equation \eqref{eq:concon} above. There are many functions so that \eqref{eq:concon} holds. For examples, the all polynomial functions on $\mathbb{R}^n$, exponential functions
$\exp\{a\sum\limits_{j=1}^n u_j\}$ for real number $a$ and the trigonometric function $\sin(\sum\limits_{j=1}^n u_j), \cos(\sum\limits_{j=1}^n u_j^2)$ and so on. We can get many formulas for the analytic FFTs as the series approximation.
\end{remark}


\end{document}